\newcommand{\dotnode}{ \node[fill,circle,inner sep=1pt]}
\newcommand{\tikzsetup}{
  \tikzstyle tikzdefault=[semithick,>=stealth];
  \tikzstyle overline=[preaction={draw,line
width=2mm,white,-},semithick,>=stealth];
  \tikzstyle cell=[fill=black!10,draw, very thin];
   \tikzstyle midarrow=[fill, dart, dart tail angle=112,
    dart tip angle=53,inner sep=0.55pt,sloped,allow upside down];
   \tikzstyle revmidarrow=[fill, dart, dart tail angle=112,
    dart tip angle=53,inner sep=0.55pt,sloped,
    allow upside down,rotate=180];
}
\newcommand{\OrientedCircle}[1][]{
  \begin{scope}[tikzdefault,#1]
  \draw[->] (-1,0) arc (-180:0:1);
  \draw[-] (1,0) arc (0:180:1);
  \end{scope}
}
\newcommand{\HalfDashedCircle}[1][]{
  \begin{scope}[tikzdefault,#1]
  \draw[densely dashed, -] (0,1) arc (90:270:1);
  \draw (0,-1) arc (-90:90:1);
  \end{scope}
}
\newcommand{\tzAnnulusWithRadius}{
\begin{tikzpicture}[baseline]
\tikzsetup
\draw[cell, semithick, even odd rule] (0,0) circle (0.3) 
               (0,0) circle (0.7);
\coordinate (a1) at (0,0.3);\dotnode at (a1){};
\coordinate  (a2) at (0,0.7);\dotnode at (a2){};
\draw[tikzdefault] (a1) --  (a2) ;
\end{tikzpicture}
}
\newcommand{\tzRectangle}{
\begin{tikzpicture}[baseline]
\tikzsetup

\coordinate (a1) at (0,0.4); \dotnode at (a1){};
      \node [above] at (a1) {$\scriptstyle a_1$};
\coordinate (a2) at (1.8,0.4); \dotnode at (a2){};
      \node [above] at (a2) {$\scriptstyle a_2$};

\coordinate (b1) at (0,-0.4); \dotnode at (b1){};
      \node [below] at (b1) {$\scriptstyle b_1$};
\coordinate (b2) at (1.8,-0.4); \dotnode at (b2){};
      \node [below] at (b2) {$\scriptstyle b_2$};
\draw[cell, semithick, even odd rule] (a1)--(a2)--(b2)--(b1)--cycle;
\end{tikzpicture}
}
\newcommand{\tzAnnulusWithRadiusLabeled}{
\begin{tikzpicture}[baseline]
\tikzsetup
\draw[cell, semithick, even odd rule] (0,0) circle (0.4) 
               (0,0) circle (0.8);
\coordinate (a1) at (0,0.4);\dotnode at (a1){}; 
      \node [below] at (a1) {$\scriptstyle b$};
\coordinate  (a2) at (0,0.8);\dotnode at (a2){};
      \node [above] at (a2) {$\scriptstyle a$};

\draw[tikzdefault] (a1) --  (a2) ;
\end{tikzpicture}
}
\newcommand{\tzAnnulusWithRadiusSubdivision}{
\begin{tikzpicture}[baseline]
\tikzsetup
\draw[cell, semithick, even odd rule] (0,0) circle (0.3) 
               (0,0) circle (0.7);
\coordinate (a1) at (0,0.3);\dotnode at (a1){};
\coordinate  (a2) at (0,0.7);\dotnode at (a2){};
\coordinate  (a3) at (0,-0.5);\dotnode at (a3){};

\draw[tikzdefault] (a1) --  (a2) ;
\draw[tikzdefault] (0,-0.1) circle (0.4);
\draw[tikzdefault] (0,0.1) circle (0.6);

\end{tikzpicture}
}
\newcommand{\tzCircle}{
\begin{tikzpicture}[baseline]
\tikzsetup
\draw[cell, semithick] (0,0) circle (0.7);
\coordinate  (a2) at (0,0.7){};\dotnode at (a2){};
\end{tikzpicture}
}
\newcommand{\tzCircleII}{
\begin{tikzpicture}[baseline]
\tikzsetup
\draw[cell, semithick] (0,0) circle (0.7);
\coordinate  (a2) at (0,0.7){};\dotnode at (a2){};
\coordinate  (a3) at (0,-0.7){};\dotnode at (a3){};
\end{tikzpicture}
}
\newcommand{\tzCircleIII}{
\begin{tikzpicture}[baseline]
\tikzsetup
\draw[cell, semithick] (0,0) circle (0.7);
\coordinate  (a2) at (0,0.7){};\dotnode at (a2){};
\coordinate  (a3) at (0,-0.7){};\dotnode at (a3){};
\draw[tikzdefault] (a2)--(a3);
\end{tikzpicture}
}
\newcommand{\tzCircleIV}{
\begin{tikzpicture}[baseline]
\tikzsetup
\draw[cell, semithick] (0,0) circle (0.7);
\coordinate  (a2) at (0,0.7){};\dotnode at (a2){};
\coordinate  (a3) at (0,-0.7){};\dotnode at (a3){};
\coordinate  (a4) at (0,0){};\dotnode at (a4){};
\draw[tikzdefault] (a2)--(a3);
\end{tikzpicture}
}
\newcommand{\tzCircleV}{
\begin{tikzpicture}[baseline]
\tikzsetup
\draw[cell, semithick] (0,0) circle (0.7);
\coordinate  (a2) at (0,0.7){};\dotnode at (a2){};
\coordinate  (a3) at (0,-0.7){};\dotnode at (a3){};
\coordinate  (a4) at (0,0){};\dotnode at (a4){};
\draw[tikzdefault] (a2)--(a4);
\end{tikzpicture}
}
\newcommand{\tzCircleWithRadius}{
\begin{tikzpicture}[baseline]
\tikzsetup
\draw[cell, semithick] (0,0) circle (0.7);
\coordinate  (a1) at (0,0){};\dotnode at (a1){};
\coordinate  (a2) at (0,0.7){};\dotnode at (a2){};
\draw[tikzdefault] (a1) --  (a2);
\end{tikzpicture}
}
\newcommand{\tzCircleWithRadiusOrient}{
\begin{tikzpicture}[baseline]
\tikzsetup
\fill[black!10] (0,0) circle (0.7);
\OrientedCircle[scale=0.7,semithick]
\coordinate  (a1) at (0,0){};\dotnode at (a1){};
\coordinate  (a2) at (0.7,0){};\dotnode at (a2){};
\draw[tikzdefault,->] (a1) --  (a2) node[pos=0.4,above] {$a$};
\node at (-40:0.85cm) {$b$};
\end{tikzpicture}
}
\newcommand{\tzTwoGlobe}{
\begin{tikzpicture}[baseline]
\tikzsetup
\coordinate  (a1) at (-1,0){};\dotnode at (a1){};
\coordinate  (a2) at (1,0){};\dotnode at (a2){};
\draw[fill=black!10,semithick, even odd rule] (a1) .. controls +(40:0.7cm) and +(140:0.7cm)..  (a2) 
 .. controls +(-140:0.7cm) and +(-40:0.7cm).. (a1);
\end{tikzpicture}
}
\newcommand{\tzZigZagComplex}{
\begin{tikzpicture}[baseline]
\tikzsetup
\coordinate (a1) at (-1,-0.5){};
\coordinate  (a2) at (1,-0.5){};
\coordinate  (a3) at (-1,0.8){};
\coordinate  (a4) at (0.2,0.8){};\coordinate  (a7) at (0.2,0.5){};\coordinate  (a8) at (0.2,-0.5){};
\coordinate  (a5) at (-0.2,0.5){};\coordinate  (a9) at (-0.2,-0.5){};
\coordinate  (a6) at (1,0.5){};
\filldraw[fill=black!5,very thin] (a4)--(a5)--(a7)--cycle;
\filldraw[cell,semithick] (a1) -- (a3) -- (a4) -- (a5) -- (a6) -- (a2) -- cycle;
\draw[thin, densely dashed](a7)--(a8);
\draw[very thin](a5)--(a9);
\dotnode at (a1){};\dotnode at (a2){};\dotnode at (a3){};\dotnode at (a6){};
\end{tikzpicture}
}
\newcommand{\tzPentagon}{
\begin{tikzpicture}[baseline]
\tikzsetup
\coordinate (a0) at (0:0.7cm){};
\coordinate (a1) at (72:0.7cm){};
\coordinate (a2) at (144:0.7cm){};
\coordinate (a3) at (216:0.7cm){};
\coordinate (a4) at (288:0.7cm){};
\draw[cell,semithick] (a0) --  (a1) -- (a2) -- (a3) -- (a4) -- cycle;
\dotnode at (a0){};\dotnode at (a1){};\dotnode at (a2){};\dotnode at (a3){};\dotnode at (a4){};
\end{tikzpicture}
}
\newcommand{\tzPentagonRadialDivision}{
\begin{tikzpicture}[baseline]
\tikzsetup
\coordinate (a0) at (0:0.7cm){};
\coordinate (a1) at (72:0.7cm){};
\coordinate (a2) at (144:0.7cm){};
\coordinate (a3) at (216:0.7cm){};
\coordinate (a4) at (288:0.7cm){};
\draw[cell,semithick] (a0) --  (a1) -- (a2) -- (a3) -- (a4) -- cycle;
\dotnode at (a0){};\dotnode at (a1){};\dotnode at (a2){};\dotnode at (a3){};\dotnode at (a4){};
\coordinate (O) at (0,0); \dotnode at (O){};
\draw[tikzdefault](O) -- (a0) (O)--(a1) (O) -- (a2) (O) --(a3) (O) -- (a4);
\end{tikzpicture}
}
\newcommand{\tzPentagonElemDivision}{
\begin{tikzpicture}[baseline]
\tikzsetup
\coordinate (a0) at (0:0.7cm){};
\coordinate (a1) at (72:0.7cm){};
\coordinate (a2) at (144:0.7cm){};
\coordinate (a3) at (216:0.7cm){};
\coordinate (a4) at (288:0.7cm){};
\draw[cell,semithick] (a0) --  (a1) -- (a2) -- (a3) -- (a4) -- cycle;
\draw[tikzdefault] (a1)--(a3);
\dotnode at (a0){};\dotnode at (a1){};\dotnode at (a2){};\dotnode at (a3){};\dotnode at (a4){};
\end{tikzpicture}
}
\newcommand{\tzTetrahedron}{
\begin{tikzpicture}[baseline]
\tikzsetup
\coordinate (A) at (0,0){};
\coordinate (B) at (0:1cm){};
\coordinate (C) at (25:1.2cm){};
\coordinate (D) at (0.7cm, 1cm){};
\draw[tikzdefault] (A) --  (B) -- (D) --  cycle (B)--(C)--(D);
\draw[tikzdefault,densely dashed] (A)--(C);
\dotnode at (A){};\dotnode at (B){};\dotnode at (C){};\dotnode at (D){};
\end{tikzpicture}
}
\newcommand{\tzTetrahedronSubdivision}{
\begin{tikzpicture}[baseline]
\tikzsetup
\coordinate (A) at (0,0){};
\coordinate (B) at (0:1cm){};
\coordinate (C) at (25:1.2cm){};
\coordinate (D) at (0.7cm, 1cm){};
\draw[tikzdefault] (A) --  (B) -- (D) -- node (E){} (A) (B)--(C)--(D);
\draw[tikzdefault,densely dashed] (A)--(C);
\dotnode at (A){};\dotnode at (B){};\dotnode at (C){};\dotnode at (D){}; \dotnode at (E){};
\end{tikzpicture}
}
\newcommand{\tzCylinderI}{
\begin{tikzpicture}
\tikzsetup

\begin{scope}[tikzdefault,xscale=0.7]
\coordinate (a1) at (0,0.5);\coordinate (a2) at (7,0.5);
\coordinate (b1) at (0,1.5);\coordinate (b2) at (7,1.5);
\coordinate (c1) at (0,-0.5);\coordinate (c2) at (7,-0.5);
\coordinate (d1) at (0,-1.5);\coordinate (d2) at (7,-1.5);

\path[cell] (a1) arc(-270:89.9:0.5) -- ++(0,1) arc
(89.9:-270:1.5)--cycle;

\fill[yellow!20] (d1) arc (-90:90:1.5) -- (b2) arc (90:-90:1.5)--cycle;

\dotnode  at (a1) {};
\dotnode  at (b1) {};
\dotnode  at (a2) {};
\dotnode  at (b2) {};
\draw[semithick] (0,0) circle (0.5);
 
\draw[semithick] (0,0) circle (1.5);
 
\draw[semithick] (a1)--(a2);
\draw[semithick] (b1)--(b2);
\draw[semithick,densely dashed] (c1)--(c2);
\draw[semithick] (d1)--(d2);
\draw[semithick] (a1)-- (b1);
\draw[semithick] (a2)-- (b2);
\draw[semithick,densely dashed] (7cm, 0cm) circle (0.5);
 
\HalfDashedCircle[xshift=7cm,scale=1.5,semithick]

\end{scope}
\end{tikzpicture}
}
\title{On piecewise linear cell decompositions}
\author{Alexander Kirillov, Jr.}
\address{Department of Mathematics, SUNY at Stony Brook, 
            Stony Brook, NY 11794, USA}
    \email{kirillov@math.sunysb.edu}
    \urladdr{http://www.math.sunysb.edu/\textasciitilde kirillov/}
\newtheorem*{theorem*}{Theorem}
\newtheorem{theorem}{Theorem}[section]
\newtheorem{lemma}[theorem]{Lemma}
\newtheorem{corollary}[theorem]{Corollary}
\theoremstyle{definition}
\newtheorem{definition}[theorem]{Definition}
\newtheorem{example}[theorem]{Example}
\theoremstyle{remark}
\newtheorem{remark}[theorem]{Remark}
\numberwithin{equation}{section}
\newcommand{\firef}[1]{Figure~{\rm\ref{#1}}}
\newcommand{\thref}[1]{Theorem~{\rm\ref{#1}}}
\newcommand{\leref}[1]{Lemma~{\rm\ref{#1}}}
\newcommand{\coref}[1]{Corollary~{\rm\ref{#1}}}
\newcommand{\deref}[1]{Definition~{\rm\ref{#1}}}
\newcommand{\exref}[1]{Example~{\rm\ref{#1}}}
\newcommand{\seref}[1]{Section~{\rm\ref{#1}}}
\newcommand{\st}{\; | \;}                               
\renewcommand{\odot}[1]{\overset{\circ}{#1}}
\newcommand{\mdot}[1]{\overset{\text{\bf\Large .}}{#1}}
\newcommand{\elem}{\underset{\scriptstyle e}\sim}
\newcommand{\del}{\partial}
\newcommand{\isoto}{\xrightarrow{\sim}}       
\newcommand{\ph}{\varphi}
\newcommand{\eps}{\varepsilon}
\DeclareMathOperator{\Int}{Int}
\renewcommand{\cl}{\operatorname{cl}}
\begin{document}

\thanks{This  work was partially suported by NSF grant DMS-0700589 }

\begin{abstract}
In this note, we introduce a class of cell decompositions of PL manifolds
and polyhedra which are more general than triangulations yet not as general
as CW complexes; we propose calling them  PLCW complexes. The main result
is an analog of Alexander's theorem: any two PLCW decompositions of the
same polyhedron can be obtained from each other by a sequence  of certain
``elementary'' moves.

This definition is motivated by the needs of Topological Quantum Field
Theory,  especially extended theories as defined by Lurie. 
\end{abstract}
\maketitle

\section{Introduction}\label{s:intro}
One of the main tools for studying  piecewise-linear manifolds is the
notion  of triangulation, or more generally, cell complexes formed by
convex cells. However, for many purposes this  is too restrictive. For
example, for any  explicit computation of state-sum invariants of
3-manifolds, triangulations turn out  to be a very inefficient tool: the
number of simplices is necessarily quite large, a cylinder over a
triangulated manifold (or, more generally, a product of two triangulated
manifolds) does not have a canonical triangulation, etc. Allowing
arbitrary convex cells helps but does not solve all the problems: for example, a cell
decomposition shown below (which is quite useful for extended
topological field theories and 2-categories, as it illustrates a
2-morphism between two 1-morphisms) can not be realized using only convex
cells. 
$$
\tzTwoGlobe
$$

In addition, for
many constructions it would be desirable to allow ``singular
triangulations'', where the different faces of the same  cell are allowed
to be glued to each other (for example, this would allow a cell
decomposition of the torus $T^2$ obtained by gluing opposite sides of a
rectangle). On the other hand, CW complexes are too general and  using them
creates other problems: for example, there is no analog of Alexander's
theorem describing simple moves necessary to obtain one CW cell decomposition
from another. 

In this note, motivated by the author's  earlier work with
Balsam~\cite{balsam-kirillov}, we
introduce a new notion of a cell decomposition of a compact polyhedron (in
particular, a PL manifold) which will address many of the problems mentioned
above. We propose calling such cell decompositions PLCW cell
decompositions. We also prove an analog of Alexander's theorem: any two
PLCW decompositions of the same polyhedron can be obtained from each other
by a sequence  of certain ``elementary'' moves; these moves are special
cases of cell  moves introduced by Oeckl~\cite{oeckl}.

\subsection*{Acknowledgments}
The authors would like  to thank
Oleg Viro, Scott Morrison, Benjamin Balsam and Robert Oeckl 
for helpful suggestions and discussions.

This  work was partially suported by NSF grant DMS-0700589.

\section{Basic definitions}\label{s:basic}
In this   section we recall some basic definitions and facts
of PL topology,  following notation and terminology of Rourke and
Sanderson~\ocite{rourke}, where one can also find   the proofs of all
results mentioned here.  

Throughout this paper, the word ``map'' will mean ``piecewise linear map''.
We will write $X\simeq Y$  if there exists a PL homeomorphism $X\to Y$. 

For a subset $X\subset \R^N$, we denote $\Int(X)$ the interior of $X$, by
$\cl(X)$  the closure of $X$ and by $\del X$ the boundary of $X$. We will
also use the following standard notation: 

$B^n=[-1,1]^n\subset \R^n$ --- the $n$-dimensional ball

$S^n=\del B^{n+1}$ --- the $n$-sphere

$\Delta^n\subset \R^{n+1}$--- the $n$--dimensional simplex (note that
$\Delta^n\simeq B^n$)

For any polyhedra $X\subset \R^N$ and a point $a\in \R^N$, we denote $aX$
the cone over $X$.  More generally, given two polyhedra $X,Y\subset \R^N$,
we denote   by $XY$ the join of $X,Y$. When using this notation, we will
always assume that $X,Y$ are independent, i.e. that every $p\in XY$  
can be uniquely written as  $p=ax+by,$ $a,b\in \R$, $a+b=1$.  For two
polyhedra  $X\in \R^n$, $Y\in \R^m$, we denote by $X*Y\subset \R^{n+m+1}$
their external join.

We define a convex $n$-cell $C\subset \R^N$ as a convex compact polyhedron
generating  an affine subspace of dimension $n$; in such a situation, we will
also write $\dim C=n$. In Rourke and Sanderson~\cite{rourke}, these are
called just cells; we prefer a more specific name to avoid confusion with
other types of cells to be introduced later. 

For any such cell we can define the set $F(C)$ of faces of $C$ (of arbitrary codimension); 
each face $F$ is itself a convex cell. We will write $F<C$ if $F, C$ are convex cells
and $F$ is a face of $C$.

Recall that  each convex  cell $C$ is homeomorphic to
a ball: $C=\ph(B^n)$ for some homeomorphism $\ph$. As usual, we denote 
\begin{align*}
 \odot{C}&=\Int(C)=\ph(\Int(B^n))\\
 \mdot{C}&=\del C=\ph(\del B^n)
 \end{align*}
if $\dim C>0$. If $\dim C=0$, i.e. $C$ is a point, then we let
$\odot{C}=C$, $\mdot{C}=\varnothing$. 

Following  \ocite{rourke}, we define  a cell complex $K$  as a finite
collection of convex cells in $\R^N$ such  that the following conditions are 
satisfied:
\begin{enumerate}
\item If $A\in K$ and $B<A$, then $B\in K$
\item If $A,B\in K$, and $F=A\cap B\ne \varnothing$, then $F<A$, $F<B$.
\end{enumerate}

We define the support $|K|=\cup_{C\in K} C$; it is a compact polyhedron in $\R^N$. 
Conversely, given a compact polyhedron $X$, a {\em cell decomposition} of $X$ is a 
complex $K$ such that $|K|=X$; it is known that such a decomposition always exists. 
We will denote by $\dim K$ the dimension of $K$ and by $K^n$ the $n$-skeleton of $K$. 
Given a complex $K$ and a cell $C$, we will denote $K+C$ the complex obtained by 
adding to $K$ the cell $C$ assuming that it does form a complex.

In particular, given a convex cell $C$, the set $F(C)$ of faces of $C$ is a
cell  complex, with $|F(C)|=\mdot{C}$; by adding to it $C$ itself, we get a
cell decomposition  of $C$.

\section{Generalized cells}\label{s:gen_cell}

Let $C$ be a convex cell in $\R^N$. 
\begin{definition}
 A map $f\colon C\to \R^m$ is called {\em regular} if the restriction
$f|_{\odot C}$ is injective.  
\end{definition}

\begin{lemma}\label{l:regular1}
If $C$ is a convex cell and $f\colon C\to \R^m$ is regular, then $C$ admits
a cell decomposition $K$  such that for any cell $K_i\in K$, the
restriction $f|_{K_i}$ is injective.
\end{lemma}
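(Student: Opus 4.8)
The plan is to replace $f$ by an affine map on a fine enough decomposition of $C$, and then deduce injectivity on every cell by pushing the question onto the top-dimensional cells, where the regularity hypothesis applies directly. Write $n=\dim C$. First, since $f$ is piecewise linear and $C$ is compact, the basic theory of PL maps (Rourke and Sanderson~\cite{rourke}) provides a cell decomposition $K$ of $C$, which may be taken to be a triangulation, such that $f|_{K_i}$ is affine for every cell $K_i\in K$; this is the $K$ I will use.

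The key preliminary is that every cell of $K$ is a face of some $n$-dimensional cell. For a triangulation of the ball $C\simeq B^n$ this is the standard purity of triangulations of a PL manifold: the link in $K$ of a simplex $\tau$ with $\dim\tau=k<n$ is a PL sphere or ball of dimension $n-k-1\ge 0$, hence nonempty, so $\tau$ is a proper face of a $(k+1)$-simplex; iterating (and using transitivity of the face relation) places $\tau$ inside an $n$-simplex of $K$.

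Now for a cell $\sigma\in K$ with $\dim\sigma=n$ one has $\odot{\sigma}\subset\odot{C}$, since any point of $\odot{\sigma}$ has a neighbourhood, relatively open in the common affine span of $\sigma$ and $C$, that lies in $\sigma\subset C$ and hence in $\Int(C)=\odot{C}$. By regularity $f|_{\odot{C}}$ is injective, so $f|_{\odot{\sigma}}$ is injective; and since $f|_\sigma$ is the restriction of an affine map on the $n$-dimensional span of $\sigma$ which is injective on the nonempty relatively open set $\odot{\sigma}$, its linear part has trivial kernel and $f|_\sigma$ is injective. An arbitrary $\tau\in K$ is a face of some such $\sigma$ by the previous step, so $f|_\tau$ is a restriction of the injective map $f|_\sigma$, hence injective. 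Thus $K$ is a decomposition of the required kind.

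The point to be careful about is why the argument has to be routed through the top cells at all: a low-dimensional cell on which the affine map $f$ collapses a dimension cannot be made injective by any further subdivision, so injectivity on every cell must instead be inherited from the top-dimensional cells --- and it is exactly purity of decompositions of the manifold $C$ that makes this inheritance possible. The remaining ingredients (existence of the linearizing triangulation, the elementary fact that an affine map injective on a relatively open subset of its span is globally injective, and that restrictions of injective maps are injective) are routine.
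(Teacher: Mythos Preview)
Your proof is correct and follows the same strategy as the paper's: pass to a decomposition on which $f$ is affine, then use that an affine map injective on a nonempty relatively open subset of its span is injective. The paper's two-line argument simply says ``a linear map which is injective on an open set is injective'' and leaves the passage to lower-dimensional cells implicit; you spell out the purity step (every cell is a face of a top-dimensional cell, so injectivity is inherited), which is a reasonable elaboration of the same idea rather than a different route.
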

\begin{proof}
By standard results of PL topology, $C$ admits a cell decomposition 
such that $f|_{K_i}$ is linear, and a linear map which  is injective on an
open set is injective. 
\end{proof}

We can now define the generalization of the notion of a convex cell. 

\begin{definition}\label{d:gen_cell}
A   {\em generalized $n$-cell} is a subset $C\subset \R^N$ together with 
decomposition $C=\odot{C}\sqcup \mdot{C}$ such that $\odot{C}=\ph(\Int B^n)$, 
$\mdot{C}=\ph(\del B^n)$ (and thus $C=\ph(B^n)$) for some regular map $\ph\colon B^n\to \R^N$. 

In such a situation, the map $\ph$ is called a {\em characteristic map}.
\end{definition}
Note that the definition implies that $C=\cl(\odot{C})$, so $C$ 
is completely determined by $\odot{C}$. It is also clear from
\leref{l:regular1} that any generalized cell is a compact polyhedron. 

Clearly any  convex cell is automatically a generalized cell. 
Other examples of generalized cells are shown in \firef{f:gcc1} below. 

Note that characteristic map $\ph$ in the definition of generalized 
cell is not unique. However, as the following theorem shows, it is unique 
up to a PL homeomorphism of the ball.
\begin{theorem}\label{t:char_map}
Let $C\subset \R^N$ be a generalized cell and $\ph_1,\ph_2\colon B^n\to C$ 
be two characteristic maps. Then there exists a unique homeomorphism 
$\psi\colon B^n\to B^n$ such that $\ph_1=\ph_2\circ \psi$. 
\end{theorem}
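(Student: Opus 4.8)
The plan is to reduce the statement to the corresponding fact about characteristic maps of convex cells, which is essentially the assertion that a PL homeomorphism of a ball is determined by its restriction to the interior. I would argue as follows.

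First, I would establish uniqueness of $\psi$. If $\psi_1,\psi_2$ both satisfy $\ph_1=\ph_2\circ\psi_i$, then on $\Int B^n$ we have $\ph_2\circ\psi_1=\ph_2\circ\psi_2$; since $\ph_2$ is regular, $\ph_2|_{\odot C}$ is injective, hence $\psi_1=\psi_2$ on $\Int B^n$, and by continuity (both are PL, hence continuous, and $\Int B^n$ is dense in $B^n$) they agree on all of $B^n$. So uniqueness is immediate and costs nothing.

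For existence, the natural candidate is $\psi=(\ph_2|_{\odot C})^{-1}\circ\ph_1$, defined a priori only on $\Int B^n$. The real content is: (a) this partially-defined map extends continuously to a map $B^n\to B^n$, and (b) the extension is a PL homeomorphism. For (b) I would use \leref{l:regular1}: choose a common cell decomposition $K$ of $B^n$ refining enough structure so that both $\ph_1|_{K_i}$ and $\ph_2|_{K_i}$ are injective (equivalently linear and injective) on each cell $K_i$ — one can take a common refinement of the two decompositions provided by the lemma. Then on each $K_i$ the map $\psi$ is, where defined, a composition of linear injections, hence linear; a linear map defined and injective on a dense (open) subset of a polyhedron $K_i$ extends uniquely and linearly to $K_i$. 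Patching these linear pieces over the cells of $K$ gives a well-defined PL map $\psi\colon B^n\to B^n$ with $\ph_1=\ph_2\circ\psi$ everywhere by continuity. Running the same construction with the roles of $\ph_1,\ph_2$ exchanged produces a PL map $\psi'$ with $\ph_2=\ph_1\circ\psi'$, and then $\psi'\circ\psi$ and $\psi\circ\psi'$ both restrict to the identity on $\Int B^n$, hence (by the uniqueness argument) are the identity on $B^n$; so $\psi$ is a PL homeomorphism.

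The step I expect to be the main obstacle is verifying that $\psi$ maps $B^n$ into $B^n$ — i.e., that the continuous extension sends the boundary to the boundary and does not escape the ball. This is where one genuinely uses that $C=\cl(\odot C)$ and that $\ph_1,\ph_2$ are characteristic maps of the \emph{same} generalized cell: a point $x\in\del B^n$ is a limit of interior points $x_k$, so $\ph_1(x)=\lim\ph_1(x_k)=\lim\ph_2(\psi(x_k))$ lies in $\cl(\odot C)=C$, hence equals $\ph_2(y)$ for some $y\in B^n$; one must check $\psi(x)=y$ and that $y\in\del B^n$ rather than $\odot C$, using injectivity of $\ph_2$ on the interior together with invariance-of-domain-type reasoning (a PL homeomorphism $B^n\to B^n$, built as above, cannot carry a boundary point to an interior point). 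Once the maps are known to be mutually inverse PL bijections this is automatic, so in practice I would fold this into the symmetric argument of the previous paragraph rather than prove it in isolation.
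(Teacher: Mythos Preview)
Your argument is correct and follows essentially the same route as the paper: define $\psi=\ph_2^{-1}\circ\ph_1$ on $\Int B^n$, invoke \leref{l:regular1} to produce a cell decomposition of $B^n$ on which $\psi$ is cellwise linear, and conclude that it extends to a PL homeomorphism of the closed ball. The paper's proof is much terser---it omits the uniqueness argument, the symmetric construction of the inverse, and your discussion of why the extension stays inside $B^n$---so your version is in fact more complete than the original.
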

\begin{proof}
Since restriction of $\ph_i$ to $\Int(B^n)$ is injective,  the composition
$\odot\psi= \ph_2^{-1}\ph_1$  is well defined as a map $\Int(B^n)\to
\Int(B^n)$. To show that it can be extended to the boundary, note that it
follows from \leref{l:regular1} that one can find a cell
decomposition $K$ of $B_n$ such that
$\odot\psi|_{K_i}$ is linear for every $n$-cell $K_i\in K$. This
immediately implies that $\odot\psi$ can be extended to a homeomorphism 
$\psi\colon B^n\to B^n$.

\end{proof}
It is easy to show that cone and join of generalized cells is again a 
generalized cell. Namely, if $C=\ph(B^n)$ is a generalized cell, and $aC$ 
is the cone of $C$, then the map $\ph$ can be in an obvious way lifted 
to a map $\{pt\}*B^n\simeq B^{n+1}\to aC$, which is easily seen to be 
regular. Thus, $aC$  is a generalized cell.  In the similar way, using 
homeomorphism $B^m*B^n\simeq B^{m+n+1}$, one shows that if $C_1, C_2$ 
are generalized cells that are independent, then the join $C_1C_2$ is 
also a generalized cell.

\section{Generalized cell complexes}\label{s:gen_complex}

{\bf From now on, unless noted otherwise, the word ``cell'' stands for a 
generalized cell.}

\begin{definition}\label{d:gen_complex}
A generalized cell complex (g.c.c.) is a finite collection $K$ of
generalized cells in $\R^N$  such that 
\begin{enumerate}
\item for any distinct $A, B$ in $K$, we have 
$$
\odot{A}\cap \odot{B}=\varnothing
$$
\item For any cell $C\in K$, $\mdot{C}$ is a union of cells. 
\end{enumerate}

Support $|K|\subset \R^N$ of a generalized cell   complex $K$ is defined by 
$$
|K|=\bigcup_{C\in K}C
$$

A generalized cell  decomposition of a compact polyhedron
$P\subset \R^N$ is a generalized complex $K$ such that $|K|=P$. 
\end{definition}

We define the dimension $\dim K$ of a generalized cell complex
and the $n$-skeleton $K^n$ in the usual way. Also, if $A,B\in K$ are 
cells such that $A\subset \mdot{B}$, we will say that $A$ is a face of $B$ 
and write $A<B$; clearly this is only possible if $\dim A<\dim B$. 

If $K,L$ are g.c.c., we denote by $K+L$ the complex obtained by 
taking all cells of $K$ and all cells of $L$, assuming that the 
result is again a g.c.c.

\begin{example}\label{x:gcc}
\begin{enumerate}
\item Any cell complex is automatically a g.c.c. 
\item A 0-dimensional g.c.c. is the same as finite collection of points. A
1-dimensional  g.c.c. is the same as a finite collection of points
(vertices) and  non-intersecting arcs (1-cells) with endpoints at these
vertices.  Note that loops are allowed. 
\item \firef{f:gcc1} shows some examples of 2-dimensional g.c.c. 
\begin{figure}[ht]
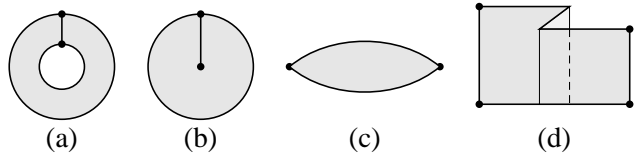


\begin{tabular}{cccc}
\tzAnnulusWithRadius&
\tzCircleWithRadius&
\tzTwoGlobe&
\tzZigZagComplex\\
(a)&(b)&(c)&(d)
\end{tabular}
\caption{Examples of 2-dimensional generalized cell complexes.
The last one can be visualized as a sheet of paper with a fold, 
with the lower edge glued back to itself. Note that it only has four 1-cells: 
the lines showing where the paper was folded  are not 1-cells.}
\label{f:gcc1}
\end{figure}

\item \firef{f:gcc2} shows a generalized cell decomposition of $S^1\times I\times I$ 
consisting of a single 3-cell, five 2-cells, eight 1-cells and 4 vertices.
\begin{figure}[ht]
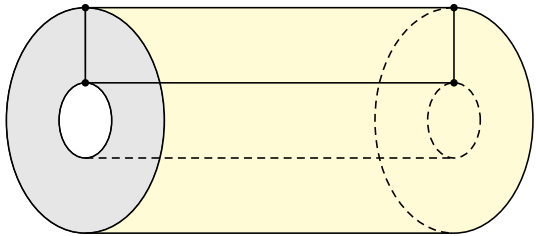

\tzCylinderI
\caption{A generalized cell decomposition of $S^1\times I\times
I$}\label{f:gcc2} 
\end{figure}
 
\end{enumerate}
\end{example}

\begin{definition}\label{d:regular_cellular}
 Let $K,L$ be g.c.c. A {\em regular cellular map} $f\colon L\to K$ is a map
$f\colon |L|\to |K|$ such that for every cell $C\in L$, $C=\ph(B^n)$,
there exists a cell $C'\in K$ such that $C'=f(C)$ and moreover, $f\circ
\ph\colon B^n\to C' $ is a characteristic map for $C'$. 
 \end{definition}
In other words, such a map is allowed to identify different 
cells of $L$ but is injective on the interior of each cell. 

An example of a regular cellular map is shown in
\firef{f:regular_cellular}.
\begin{figure}[ht]
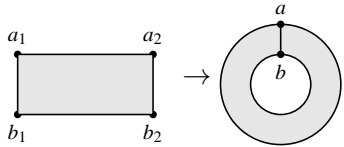

$\tzRectangle\to \tzAnnulusWithRadiusLabeled$
\caption{An example of a regular cellular map.
It identifies edges $a_1b_1$ and $a_2b_2$,
sending each of them to edge $ab$.}\label{f:regular_cellular}
\end{figure}
\section{PLCW complexes}\label{s:PLCW}
In this section, we give the central definition of the paper.

\begin{definition}\label{d:PLCW}
 A generalized cell complex  (respectively, a generalized cell
decomposition) $K$ will be called a {\em PLCW complex} (respectively, PLCW
decomposition) if $\dim K=0$, or $\dim K=n>0$ and the following
conditions holds:
\begin{enumerate} 
  \item $K^{n-1}$ is a PLCW complex
  \item For any $n$-cell $A\in K$, $A=\ph(B^n)$, there exists a PLCW
  decomposition $L$ of $\del B^n$ such that the restriction 
  $\ph|_{\del B^n}\colon L\to K^{n-1}$   is a regular  cellular map. 
  (It follows from \thref{t:char_map} that this condition is independent 
  of the choice of characteristic map $\ph$.)
\end{enumerate}
\end{definition}

In other words, a PLCW is obtained by successively attaching balls, and the attaching map
should be a regular cellular map for some PLCW decomposition of the boundary sphere. 

Note that this definition is inductive: definition of an  $n$-dimensional PLCW
complex uses definition of an $(n-1)$ dimensional PLCW complex. 

\begin{example}\label{x:PLCW}
 Among examples in \exref{x:gcc}, example 2(d) is not a PLCW complex. All
other are PLCW. 
\end{example}
It is easy to show that for an $n$-cell $A\in K$ and fixed choice of characteristic 
map $\ph\colon B^n\to A$, the generalized cell decomposition $L$ of $\del B^n$ used in 
\deref{d:PLCW} is unique. Indeed, the cells of $L$ are closures of connected components
of $\ph^{-1}(\odot{K_i})$, $K_i\in K^{n-1}$. We will call such an $L$ the 
pullback of  $K$ under the map $\ph$ and denote it by 
\begin{equation}\label{e:pullback}
L=\ph^{-1}(K).
\end{equation}

The following properties of PLCW complexes are immediate from the definition.

\begin{enumerate}
\item $|K|=\sqcup_{C\in K}\odot{C}$
\item If $A,B\in K$ are two cells, then $A\cap B$ is a union of cells of $K$. 
\item For any $n$-cell $C\in K$, $\mdot{C}$ is a union of $(n-1)$-cells of $K$.
\item Every PLCW complex is automatically a CW complex. 
\end{enumerate}

Note that not every CW complex is a PLCW complex, even if its cells are polyhedra.
For example, property (3) could fail for more  general CW complexes.

The following two lemmas, proof of which is straightforward and left to the reader, 
show that product and join of PLCW complexes is a PLCW complex.

\begin{lemma}\label{l:PLCWproduct}
Let $K,L$ be PLCW complexes in $\R^M$, $\R^N$ respectively. Define the complex 
$$
K\times L=\sum K_i\times L_j\subset \R^M\times \R^N.
$$
Then $K\times L$ is a PLCW complex with support $|K|\times |L|$. 
\end{lemma}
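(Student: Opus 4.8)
The plan is to verify the two defining conditions of \deref{d:PLCW} for $K \times L$ by induction on $\dim K + \dim L$, after first checking that $K \times L$ is a g.c.c.\ with the claimed support. That $|K \times L| = |K| \times |L|$ and that condition (1) of \deref{d:gen_complex} holds (interiors of product cells are products of interiors, hence disjoint when the factors are) is immediate; the only slightly delicate point at the g.c.c.\ level is that $K_i \times L_j$ really is a generalized cell and that its boundary $\mdot{(K_i \times L_j)} = (\mdot{K_i} \times L_j) \cup (K_i \times \mdot{L_j})$ is a union of cells of $K \times L$. The first follows from the remarks after \thref{t:char_map} on cones and joins, or more directly by taking $\ph_i \times \ph_j$ precomposed with a fixed PL homeomorphism $B^{m+n} \simeq B^m \times B^n$ that carries $\del B^{m+n}$ to $(\del B^m \times B^n) \cup (B^m \times \del B^n)$; regularity of the product of two regular maps is clear on the interior. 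The second is the standard product formula for boundaries.

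For the inductive step, suppose the lemma holds for all pairs of total dimension less than $d = m + n$, and let $\dim(K \times L) = d$ (the base case $d = 0$ is trivial). Condition (1) of \deref{d:PLCW} requires $(K \times L)^{d-1}$ to be PLCW. Here I would observe that $(K\times L)^{d-1}$ is built from skeleta $K^p \times L^q$ with $p + q \le d-1$, and each such $K^p \times L^q$ is PLCW by the inductive hypothesis; one then notes that a union of PLCW subcomplexes glued along PLCW subcomplexes is PLCW (which itself follows by a routine skeleton-by-skeleton check, or one simply remarks that $(K\times L)^{d-1} = (K^{m-1}\times L) \cup (K \times L^{n-1})$ as complexes and both pieces, as well as their intersection $K^{m-1}\times L^{n-1}$, are PLCW of total dimension $< d$). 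Condition (2) requires, for each top cell $K_i \times L_j$ with $\dim K_i = m$, $\dim L_j = n$, a PLCW decomposition $M$ of $\del B^{m+n}$ with $\ph_i \times \ph_j|_{\del}$ a regular cellular map onto $(K\times L)^{d-1}$. Using the identification $\del B^{m+n} \simeq (\del B^m \times B^n) \cup (B^m \times \del B^n)$, I would take $M$ to be $(\ph_i^{-1}(K)\times \TT_j) \cup (\TT_i \times \ph_j^{-1}(L))$, where $\ph_i^{-1}(K)$ is the pullback PLCW decomposition of $\del B^m$ supplied by the fact that $K$ is PLCW, and $\TT_i$ is any fixed PLCW (indeed convex-cell) decomposition of $B^m$ refining $\ph_i^{-1}(K)$ on the boundary — for instance the cone on $\ph_i^{-1}(K)$. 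By \leref{l:PLCWproduct} applied in total dimension $< d$ (induction), each piece is PLCW, and they agree on the common subcomplex $\ph_i^{-1}(K) \times \ph_j^{-1}(L)$, so $M$ is a PLCW decomposition of $\del B^{m+n}$; that $\ph_i \times \ph_j$ restricted to $M$ is regular cellular onto $(K\times L)^{d-1}$ is then a cell-by-cell check: a product cell of $M$ maps to a product of cells of $K, L$ via a product of (regular cellular) characteristic maps, hence to a cell of $K \times L$ via a characteristic map.

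The step I expect to be the main obstacle is making precise and clean the gluing assertion used twice above: that if two PLCW complexes share a common PLCW subcomplex then their union is PLCW, and correspondingly that a pullback decomposition of a product sphere can be assembled from pullback decompositions of the factor spheres together with an auxiliary (cone) decomposition of the factor balls. The subtlety is purely bookkeeping — one must check the regular-cellular condition at the seam $K^{m-1}\times L^{n-1}$ and verify that the chosen refinements $\TT_i$ on the two sides are compatible there, which is why taking $\TT_i$ to be an honest cone on the pullback (so its boundary trace is exactly the pullback) is convenient. None of this requires new ideas beyond \thref{t:char_map} and the inductive structure of \deref{d:PLCW}, which is presumably why the paper leaves it to the reader; a fully rigorous write-up would simply spell out these skeleton inductions.
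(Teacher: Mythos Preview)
The paper leaves this proof to the reader, and your inductive strategy (on total dimension, using the product boundary formula $\del(B^m\times B^n)=(\del B^m\times B^n)\cup(B^m\times\del B^n)$) is exactly the natural one. There is, however, a genuine slip in your construction of the decomposition $M$ of $\del B^{m+n}$.

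You take $\TT_i$ to be ``any'' PLCW decomposition of $B^m$ with boundary trace $\ph_i^{-1}(K)$, and suggest the cone $O\cdot\ph_i^{-1}(K)$. But then the interior cells of $\TT_i$---the cone vertex $O$ and the cones $O\cdot L_a$---do \emph{not} map under $\ph_i$ to cells of $K$: the image $\ph_i(O)$ is just some point of $\odot{K_i}$, not a $0$-cell of $K$. Consequently a product cell such as $L_a\times(O\cdot L_c)\in M$ does not map to any cell of $K\times L$, and your ``cell-by-cell check'' that $(\ph_i\times\ph_j)|_M$ is regular cellular into $(K\times L)^{d-1}$ fails. The phrase ``a product cell of $M$ maps to a product of cells of $K,L$'' is exactly the false step.

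The fix is to take the \emph{coarsest} possible $\TT_i$, not a refinement: namely $\TT_i=\{B^m\}+\ph_i^{-1}(K)$, the single top cell together with the pullback boundary decomposition. This is PLCW (its $(m-1)$-skeleton is $\ph_i^{-1}(K)$, PLCW by hypothesis on $K$; the identity on $B^m$ is trivially regular cellular for it). Now every cell of $\TT_i$ maps under $\ph_i$ to a cell of $K$ (the top cell to $K_i$ itself, boundary cells to their images), so every product cell of
\[
M=\bigl(\ph_i^{-1}(K)\times\TT_j\bigr)\cup\bigl(\TT_i\times\ph_j^{-1}(L)\bigr)
\]
maps to a product of cells of $K$ and $L$, and $(\ph_i\times\ph_j)|_M$ is regular cellular as required. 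That $M$ is PLCW follows from the inductive hypothesis applied to each of the two pieces (products of PLCW complexes of total dimension $m+n-1$), which agree on their common subcomplex $\ph_i^{-1}(K)\times\ph_j^{-1}(L)$. With this correction your argument goes through.
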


\begin{lemma}\label{l:PLCWjoin}
Let $K,L$ be PLCW complexes in $\R^N$ such that $|K|$, $|L|$ are independent: every point 
$p\in |K| |L|$ can be uniquely written in the form $p=ax+by$, $x\in |K|$, $y\in |L|$, 
$a,b\ge 0$, $a+b=1$. Define the join  of them by
$$
KL=K+L+\sum K_i L_j, 
\qquad K_i\in K, \quad L_j
\in L
$$
Then $KL$ is a PLCW complex with support $|K| |L|$.
\end{lemma}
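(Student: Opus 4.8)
The plan is to prove \leref{l:PLCWjoin} by induction on $\dim K + \dim L$, reducing the statement for the join to the already-granted statement for the cone (the cone over a PLCW complex being PLCW, which is the special case $L = \{pt\}$, and which itself follows by the same induction or is treated as a base case). First I would check that $KL$ as defined is a g.c.c.: the cells $K_i L_j$ are generalized cells because $|K|$ and $|L|$ are independent and the join of independent generalized cells is a generalized cell (established in \seref{s:gen_cell}); disjointness of open cells and the requirement that $\mdot{(K_i L_j)}$ be a union of cells both follow from the standard identity $\mdot{(XY)} = (\mdot{X})Y \cup X(\mdot{Y})$ for joins of independent polyhedra, combined with the corresponding properties for $K$ and $L$ separately. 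The support computation $|KL| = |K||L|$ is immediate from the same join identity applied to the unions.

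The heart of the matter is the inductive verification of \deref{d:PLCW}. Set $m = \dim(KL) = \dim K + \dim L + 1$. Condition (1) asks that $(KL)^{m-1}$ be PLCW; I would identify $(KL)^{m-1}$ explicitly as $K^{\dim K - 1} L \;+\; K\, L^{\dim L - 1} \;+\; (\text{lower join cells})$, i.e. as a union of joins of skeleta, and apply the inductive hypothesis (all these have strictly smaller total dimension) together with \leref{l:PLCWjoin} for those smaller complexes — so condition (1) is bookkeeping once the induction is set up correctly. For condition (2), take a top cell of $KL$; there are three types: a cell $K_i$ of top dimension in $K$ (if $\dim KL$ is achieved there — but generically the top cells are the joins), a cell $L_j$, and most importantly a join cell $A = K_i L_j$ with $\dim K_i = \dim K$, $\dim L_j = \dim L$. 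For such $A$, a characteristic map is obtained by joining characteristic maps $\ph_K \colon B^{\dim K} \to K_i$ and $\ph_L \colon B^{\dim L} \to L_j$ via the homeomorphism $B^{\dim K} * B^{\dim L} \simeq B^m$, as in \seref{s:gen_cell}. The pullback decomposition of $\del B^m$ is then the join $\ph_K^{-1}(\mdot{K_i})$-structure $*$ $L_j$-structure glued with $K_i$-structure $*$ $\ph_L^{-1}(\mdot{L_j})$-structure — concretely it is the join of the PLCW pullback decompositions of $\del B^{\dim K}$ and $\del B^{\dim L}$ in the appropriate boundary-of-join pattern — and this is PLCW by the inductive hypothesis (again smaller total dimension), with the attaching map regular cellular because it is a join of regular cellular maps and joining preserves injectivity on open cells.

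The main obstacle I anticipate is purely combinatorial rather than conceptual: correctly describing the induced cell structure on $\del(B^{\dim K} * B^{\dim L}) \simeq S^{m-1}$ and matching it cell-by-cell against the skeleton $(KL)^{m-1}$, so that one can invoke the inductive hypothesis cleanly. In particular one must verify that the pullback of $KL$ under the join characteristic map really is the join (in the boundary-sphere sense) of the pullbacks of $K$ and $L$, which amounts to the fact that connected components of preimages of open join-cells are products/joins of the corresponding components for $K$ and $L$ — true because the join coordinates $(a,x,b,y)$ separate, but needing care at the cells lying entirely in one factor's boundary sphere. Once this identification is in hand, everything else — regularity of the attaching maps, the g.c.c. axioms, the support equality — reduces to the join identities for polyhedra already available, which is why the lemma is stated as having a straightforward proof left to the reader.
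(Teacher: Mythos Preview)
The paper does not give a proof of this lemma at all: it states ``The proof is straightforward and left to the reader.'' Your proposal is a correct and reasonable way to supply the omitted details; the inductive scheme on $\dim K + \dim L$, together with the join description of the characteristic map $B^{\dim K_i} * B^{\dim L_j} \simeq B^{\dim K_i + \dim L_j + 1}$ from \seref{s:gen_cell} and the boundary identity $\mdot{(XY)} = \mdot{X}\,Y \cup X\,\mdot{Y}$, is exactly the kind of routine verification the author has in mind.

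One small remark on your framing: you describe the argument as ``reducing \ldots\ to the already-granted statement for the cone,'' but in the paper the cone case is deduced \emph{from} \leref{l:PLCWjoin}, not the other way round, so you should not invoke it as an input. This does no harm to your argument, since your induction in fact handles the cone case ($L=\{pt\}$) uniformly with the general case; just be sure to treat $\dim K + \dim L = 0$ as the genuine base case. Also, your identification of $(KL)^{m-1}$ as a union $K^{\dim K -1}L \cup K\,L^{\dim L -1}$ is correct, but note this is a union of two PLCW subcomplexes sharing the common subcomplex $K^{\dim K -1}L^{\dim L -1}$, not a direct sum; the PLCW conditions are cell-local, so the union is PLCW once each piece is, but you should say this explicitly rather than leaving it as ``bookkeeping.''
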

The proof is straightforward and left to the reader. 

Note that in the case $K=\{a\}$ --- a point, we see that the cone 
$$
aL=a+L+\sum aL_i,\quad L_i\in L
$$
of a PLCW complex is a PLCW complex. 
\section{Subdivisions}\label{s:subdivision}
\begin{definition}\label{d:subdivision}
Let $K, L$ be PLCW complexes. We say that $L$ is a subdivision of $K$ (notation: $L\lhd K$) 
if $|K|=|L|$ and for any cell $C\in K$, we have $\odot{C}=\cup \odot{L_i}$ for some 
collection of cells $L_i\in L$. 
\end{definition}
Note that this implies that any cell $L_i\in L$ is a subset of one of the cells of $K$ 
(which is the usual definition of subdivision of cell complexes). Moreover, it is easy to 
see that if $K,L$ are cell complexes, then this definition is actually equivalent 
to the usual definition of subdivision.

There is a special kind of subdivisions we will be interesed in.

\begin{definition}\label{d:radial_subdivision}
Let $K$ be a PLCW complex, $C=\ph(B^n)$ an $n$-cell, $n>0$ and $L=\ph^{-1}(K)$ 
the pullback cell decomposition on $\del B^n$ (see \eqref{e:pullback}). 
We define the {\em radial subdivision}
of $K$ to be the subdivision obtained by replacing the cell $C$ by the 
cone PLCW complex $\ph(O)+\ph(OL_1)+\dots+\ph(OL_k)$, where 
$L=\{L_1,\dots, L_k\}$ and $O\in \Int(B^n)$ is the origin. 
(Recall that a cone of a PLCW complex is PLCW complex, see \leref{l:PLCWjoin}.)  
\end{definition}

\firef{f:radial_subdivision} shows examples of radial subdivisions.
\begin{figure}[ht]
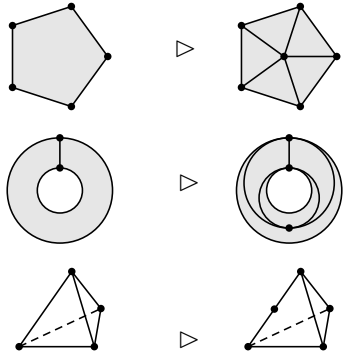

\begin{tabular}{ccc}
\tzPentagon &\quad $\rhd$ \quad &\tzPentagonRadialDivision\\ \noalign{\smallskip\smallskip\smallskip}
\tzAnnulusWithRadius & \quad$\rhd$\quad & \tzAnnulusWithRadiusSubdivision\\ \noalign{\smallskip\smallskip\smallskip}
\tzTetrahedron &\quad$\rhd$ & \tzTetrahedronSubdivision
\end{tabular}
\caption{Examples of radial  subdivisions.
Note that in the last example, we are subdividing a 1-cell.
}\label{f:radial_subdivision}
\end{figure}

Note that this is very closely related to the usual notion of stellar subdivision 
for simplicial complexes but it is not identical to it. Namely, for radial 
subdivision we are subdividing just one cell $C$ without changing the 
higher dimensional cells  adjacent to $C$ 
(see the last example in \firef{f:radial_subdivision}). Comparing it with the definition of the 
stellar subdivision, we see that if $K$ is a simplicial complex, $C\in K$ 
--- an $n$-cell, and $L$--- the stellar subdivision of $K$ obtained by starring 
at $a\in \odot{C}$, then $L$ can also be obtained by 
\begin{enumerate}
\item Replacing $C$ by the radial subdivision $R$
\item Replacing every cell $A=CB$ in the star of $C$ by the complex $R_iB$, $R_i\in R$. 
\end{enumerate}

\begin{theorem}\label{t:triangulation}
Any PLCW complex $K$ has a subdivision $T\lhd K$ which is a triangulation; 
moreover, $T$ can be obtained from $K$ by a sequence of radial subdivisions.
\end{theorem}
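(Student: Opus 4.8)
The plan is to induct on $n=\dim K$. For $n=0$ there is nothing to do, a $0$-dimensional PLCW complex being a finite set of points. Assume the statement for dimensions $<n$ and let $\dim K=n$. The first step is to triangulate the skeleton: by the inductive hypothesis applied to $K^{n-1}$, there is a sequence of radial subdivisions turning $K^{n-1}$ into a triangulation. Each of these is a radial subdivision of a cell of dimension $\le n-1$, and it extends to a radial subdivision of $K$ itself — the $n$-cells are not replaced, only the decompositions of their boundary spheres induced from the skeleton are refined, and since a radial subdivision of a PLCW complex is again PLCW (\deref{d:radial_subdivision}, \leref{l:PLCWjoin}), each pullback $\ph^{-1}(K)$ remains a PLCW $(n-1)$-sphere, so $K$ stays a PLCW complex. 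Hence we may assume $K^{n-1}$ is already simplicial.

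It remains to make the $n$-cells into simplices without disturbing the simplicial skeleton. Given an $n$-cell $A=\ph(B^n)$ with pullback $L=\ph^{-1}(K)$ on $\del B^n\simeq S^{n-1}$, radially subdividing $A$ replaces it by the cone $\ph(O)+\sum_{L_i\in L}\ph(O L_i)$, whose $n$-cells are the $\ph(O L_i)$ with $L_i$ an $(n-1)$-cell of $L$. The effect of this move is to resolve the attaching map of $A$ one level at a time: the boundary decomposition of $\ph(O L_i)$ consists of a single copy of the simplex $\ph(L_i)$ of $K^{n-1}$ together with cells obtained by coning off $\mdot{L_i}$, and if $\ph$ is injective on the subcomplex of $L$ consisting of $L_i$ and its faces, then $\ph$ embeds $O L_i$ and $\ph(O L_i)$ is a genuine simplex. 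Iterating such radial subdivisions of $n$-cells makes every $n$-cell an honest simplex; and if along the way two $n$-simplices come to have the same vertex set, one radial subdivision of either one introduces a fresh vertex in its interior and destroys the coincidence without creating a new one. When all $n$-cells are simplices with pairwise distinct vertex sets they automatically meet along common faces, so the complex is simplicial, and the whole procedure consisted of radial subdivisions only.

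The part I expect to be the real obstacle is showing that this iteration terminates. What one needs is a well-founded complexity measure on PLCW complexes with simplicial $(n-1)$-skeleton that strictly decreases under the radial subdivisions above. A natural candidate is a lexicographic pair whose first coordinate records, summed over the $n$-cells $A$, how far $\ph_A$ is from being an embedding of $B^n$ onto an $n$-simplex (vanishing exactly when $A$ is a genuine simplex), and whose second coordinate counts coincidences among the vertex sets of the $n$-simplices. The content is to verify that radially subdividing an $n$-cell realizing the current maximum of the first coordinate strictly lowers this measure — that passing from $A$ to the cones $\ph(O L_i)$ genuinely simplifies each attaching map, leaves the already-simplicial lower skeleton untouched, and introduces no new pathology among the remaining $n$-cells. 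Granting this, the induction closes and the theorem follows.
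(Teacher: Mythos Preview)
Your outline contains a real gap, and you flag it yourself: the termination of the iterative subdivision of $n$-cells is left unproved, with only a suggested complexity measure.  In fact the iteration is unnecessary.  Once $K^{n-1}$ is simplicial, the regular-cellular condition forces $\ph|_{L_i}$ to be a characteristic map onto a simplex of $K^{n-1}$ for every $(n-1)$-cell $L_i$ of the pullback $L$; by \thref{t:char_map} such a map is automatically injective on the closed cell $L_i$.  Combined with injectivity of $\ph$ on $\Int(B^n)$, this shows that $\ph$ is already injective on each cone $O L_i$, so a \emph{single} radial subdivision of each $n$-cell produces cells with injective characteristic maps --- no iteration, no complexity measure.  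What your argument is really circling around, without quite landing on, is this observation.

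The paper's proof packages this more cleanly and avoids the structural induction on $\dim K$ altogether.  It performs two passes of radial subdivisions through \emph{all} positive-dimensional cells in order of increasing dimension.  After the first pass every characteristic map is injective (by the mechanism just described, applied dimension by dimension); after the second pass every cell is a cone over a simplex in an already-triangulated boundary sphere, hence a simplex, and since all characteristic maps are embeddings the resulting cells assemble into a simplicial complex.  Termination is then immediate: exactly two radial subdivisions per cell.  Your approach can be repaired along these lines, but as written the missing termination argument is a genuine hole, and the separate handling of ``same vertex set'' collisions is an artifact of not having the injectivity of the characteristic maps in hand before coning.
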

\begin{proof}
Let $K'$ be obtained from $K$ by radially subdividing of each cell of $K$ of positive dimension
 in order of increasing dimension. Then it is easy to see that $K'$ has the following property:

\begin{equation}
\text{For any $C\in K'$, the characteristic map $\ph\colon B^n\to C$ is injective}
\end{equation}

Now, let $T$ be obtained by again doing the radial subdivision of each cell of $K'$ in order 
of increasing dimension. It is easy to see that  $T$ is a triangulation: this follows 
by induction from the fact that given a triangulation $\mdot{T}$ of $S^{n-1}$, the radial subdivision
$a\mdot{T}$ of $B^n$ is a triangulation (which in turn follows from the fact that the cone 
over a simplex is a simplex).
\end{proof}
\section{Elementary subdivisions}\label{e:elem_subdivisions}

The other type of subdivision will be called {\em elementary subdivision}. 
Informally, these are obtained by dividing an $n$-cell into two $n$-cells 
separated by an $(n-1)$-cell. To give a more formal definition,  we need some notation.

Let $H_0\subset \R^n$ be hyperplane defined by equation $x_n=0$. It divides $\R^n$ into two subspaces: 
\begin{equation}\label{e:halfspaces}
\begin{aligned}
H_+&=\{(x_1,\dots, x_n)\in \R^n\st x_n\ge 0\}\\
H_-&=\{(x_1,\dots, x_n)\in \R^n\st x_n\le 0\}
\end{aligned}
\end{equation}

For the $n$-ball $B^n\subset \R^n$, define upper and lower halfballs 
\begin{equation}
B^n_+=B^n\cap H_+, \qquad B^n_-=B^n\cap H_-
\end{equation}
We also define the middle disk and the equator by 
\begin{equation}
B^n_0=B^n\cap H_0\simeq B^{n-1}, \qquad E=S^{n-1}\cap H_0\simeq S^{n-2}
\end{equation}

\begin{lemma}\label{l:elem_subdivision}
Let $K$ be a PLCW and $C=\ph(B^n)$ --- an $n$-cell. Assume that the pullback 
decomposition  $L=\ph^{-1}(K)$ of $\del B^n$ is such that the equator $E\subset \del B^n$ 
is a union of cells of $L$. Let $K'$ be the g.c.c. obtained by replacing $C$ by the 
collection of cells $C_+=\ph(B^n_+)$, $C_-=\ph(B^n_-)$, $C_0=\ph(B^n_0)$. 

Then $K'$ is a PLCW complex; moreover, $K'$ is a subdivision of $K$.
\end{lemma}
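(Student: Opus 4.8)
The plan is to verify the two PLCW axioms of \deref{d:PLCW} for $K'$ by induction on $n=\dim K$, and then check the subdivision condition of \deref{d:subdivision}. First I would observe that the $(n-1)$-skeleton of $K'$ is obtained from $K^{n-1}$ by adding the single new $(n-1)$-cell $C_0=\ph(B^n_0)$ (the vertices and lower-dimensional cells of $\del B^n_0$ already lie in $K^{n-1}$, since $E=\del B^n_0$ is a union of cells of $L$ and these map into $K^{n-1}$ under the cellular map $\ph|_{\del B^n}$). So to see that $(K')^{n-1}$ is a PLCW complex I need: (a) its $(n-2)$-skeleton equals $K^{n-2}$, which is PLCW by hypothesis; and (b) the new $(n-1)$-cell $C_0$ has an admissible attaching map. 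For (b), the characteristic map is $\ph|_{B^n_0}\colon B^{n-1}\simeq B^n_0\to C_0$, and its restriction to $\del B^n_0=E$ factors as $E\hookrightarrow \del B^n \xrightarrow{\ph} K^{n-1}$; since $E$ is a subcomplex of $L=\ph^{-1}(K)$ and $\ph|_{\del B^n}\colon L\to K^{n-1}$ is regular cellular, the restriction to the subcomplex $E$ is again regular cellular into $(K')^{n-2}=K^{n-2}$. (One should also note $\ph|_{\odot{B^n_0}}$ is injective because $\ph|_{\odot{B^n}}$ is.) Hence $C_0$ is a legitimate PLCW cell and $(K')^{n-1}$ is PLCW.

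Next I would check axiom (2) for the $n$-cells of $K'$, namely $C_+$ and $C_-$ (all other $n$-cells of $K$ are untouched, and their attaching maps still land in $(K')^{n-1}\supset K^{n-1}$ — one must note they remain regular cellular, which is clear since nothing about those cells changed and $(K')^{n-1}$ only has more cells). Consider $C_+=\ph(B^n_+)$ with the obvious characteristic map $\ph|_{B^n_+}\colon B^{n-1}\times[0,1]\simeq B^n_+ \to C_+$. Its boundary $\del B^n_+$ is the union of the upper hemisphere $S^{n-1}_+=S^{n-1}\cap H_+$ and the middle disk $B^n_0$, glued along $E$. I need a PLCW decomposition $M$ of $\del B^n_+\simeq S^{n-1}$ such that $\ph|_{\del B^n_+}\colon M\to (K')^{n-1}$ is regular cellular. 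The natural candidate for $M$ is: on $S^{n-1}_+$, use the cells of $L$ contained in the upper hemisphere (these form a subcomplex of $L$ precisely because $E$ is a union of cells of $L$); and on $B^n_0$, use the single $(n-1)$-cell $B^n_0$ together with its boundary subcomplex $E\subset L$. One must check $M$ is a PLCW decomposition of the sphere $\del B^n_+$ — here the key point is that $B^n_0$ attached along $E$ is a PLCW cell, and gluing it to the subcomplex $S^{n-1}_+$ of $L$ along their common boundary $E$ yields a PLCW complex; this is where I would invoke, or re-derive, the fact that attaching a ball along a regular cellular map to a PLCW complex gives a PLCW complex. Then $\ph|_M$ is regular cellular: on the $L$-cells it is just the restriction of the given regular cellular map $\ph|_{\del B^n}$, and on the cell $B^n_0$ it maps homeomorphically (up to the reparametrization $\ph$) onto $C_0$, which is a characteristic map for $C_0$ by construction. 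The argument for $C_-$ is identical. This establishes that $K'$ is a PLCW complex.

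Finally, for the subdivision claim $K'\lhd K$: the supports agree, $|K'|=\ph(B^n_+)\cup\ph(B^n_-)\cup(\text{rest})=\ph(B^n)\cup(\text{rest})=|K|$. For each cell of $K$ other than $C$, it is literally a cell of $K'$, so $\odot{A}$ is trivially a union of interiors of $K'$-cells. For $C$ itself, $\odot{C}=\ph(\odot{B^n})$ decomposes as the disjoint union $\ph(\odot{B^n_+})\sqcup\ph(\odot{B^n_-})\sqcup\ph(\odot{B^n_0})=\odot{C_+}\sqcup\odot{C_-}\sqcup\odot{C_0}$, using that $\ph$ is injective on $\odot{B^n}$ so these interiors are disjoint and their union is $\odot{C}$. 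Hence $K'$ is a subdivision of $K$.

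I expect the main obstacle to be the verification that $M$ (the proposed PLCW decomposition of $\del B^n_+$) is genuinely a PLCW complex — that is, checking that gluing the disk $B^n_0$ along $E$ onto the hemispherical subcomplex $S^{n-1}_+\subset L$ satisfies the inductive PLCW axioms on the boundary sphere. This reduces to the general principle that attaching a PLCW cell to a PLCW complex along a regular cellular map of its boundary produces a PLCW complex, together with the elementary observation that $S^{n-1}_+$ and $B^n_0$ together with subcomplex $E$ cover $\del B^n_+$ in the manner of a cell decomposition. Once that structural lemma is in hand, everything else is bookkeeping of characteristic maps and restriction of regular cellular maps to subcomplexes, which are routine given \thref{t:char_map}.
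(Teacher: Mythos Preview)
The paper states this lemma without proof, evidently regarding it as routine from the definitions. Your argument is correct and is exactly the natural verification one would supply: the new $(n-1)$-cell $C_0$ has an admissible attaching map because $E$ is a subcomplex of $L$ and $\ph|_{\del B^n}$ is already regular cellular; the half-balls $C_\pm$ have boundary spheres decomposed as the appropriate hemispherical subcomplex of $L$ glued to the single cell $B^n_0$ along $E$; and the subdivision property follows from the disjoint decomposition $\odot{B^n}=\odot{B^n_+}\sqcup\odot{B^n_-}\sqcup\odot{B^n_0}$ together with injectivity of $\ph$ on $\odot{B^n}$.

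The point you single out as the main obstacle --- that your $M$ is a PLCW decomposition of $\del B^n_\pm$ --- is indeed the only step requiring a moment's thought, and your outline handles it: a subcomplex of a PLCW complex is again PLCW (an immediate induction on dimension, since boundaries of cells in the subcomplex already lie in the subcomplex), and adding the single $(n-1)$-cell $B^n_0$ via the identity characteristic map, with $\del B^n_0=E$ carrying the subcomplex structure $L|_E$, satisfies \deref{d:PLCW} trivially. So there is no gap; you have simply written out what the paper leaves implicit.
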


\begin{definition}\label{d:elem_subdivision}
Let $K,K'$ be as in \leref{l:elem_subdivision}. Then we say that $K'$ is obtained from $K$
by an elementary subdivision of cell $C$; we will also say that $K$ is obtained from 
$K'$ by erasing cell $C_0$. 

We will write $K\elem L$ if $K$ can be obtained from $L$ by a finite sequence of elementary 
subdivisions and their inverses.
\end{definition}

Note that elementary subdivisions are essentially the same as 
$(n,n)$ moves in  intruduced by Oeckl in \cite{oeckl1} and further studied Oeckl's book \cite{oeckl2}; in Oeckl's work, these moves are special case of a more general moves called $(n,k)$ moves.

An example of elementary subdivision is shown in \firef{f:elem_subdivision}.
\begin{figure}[ht]
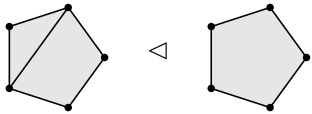

\tzPentagonElemDivision\quad$\lhd$\quad  \tzPentagon
\caption{An elementary subdivision}\label{f:elem_subdivision}
\end{figure}

\begin{remark}
Not every subdivision can be obtained by a sequence of elementary subdivisions. For example, 
the subdivision shown in \firef{f:nonelem_subdivision} can not be obtained by a sequence of 
elementary subdivisions. 
\begin{figure}[ht]
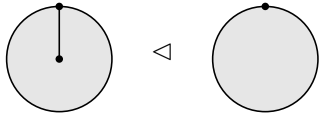

\tzCircleWithRadius\quad $\lhd$\quad \tzCircle
\caption{A  non-elementary subdivision}\label{f:nonelem_subdivision}
\end{figure}
However, it can be obtained by a sequence of elementary subdivisions 
{\bf and their inverses} as shown in \firef{f:nonelem_subdivision2}.

\begin{figure}[ht]
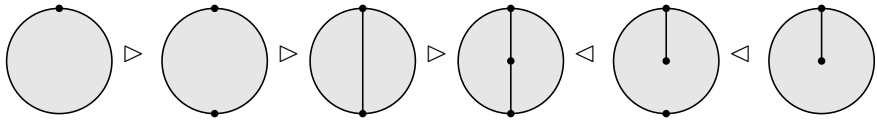

	\tzCircle $\rhd$ \tzCircleII$\rhd$\tzCircleIII$\rhd$\tzCircleIV
	$\lhd$ \tzCircleV$\lhd$ \tzCircleWithRadius
    \caption{Obtaining a non-elementary subdivision by a sequence of elementary 
			subdivisions  and their inverses}\label{f:nonelem_subdivision2}
\end{figure}

\end{remark}

\begin{theorem}\label{t:elem_join}
	If  $M=KL$ is a join of two PLCW complexes and $K'\lhd K$ --- an elementary subdivision of $K$, 
	then $M'=K'L$ be obtained from $M$ by a sequence of elementary subdivisions.
\end{theorem}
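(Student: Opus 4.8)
The plan is to reduce the statement to a purely local picture near the cell being subdivided, and then build $M'$ from $M$ by a sequence of elementary subdivisions performed cell-by-cell, in order of increasing dimension. Write the elementary subdivision $K'\lhd K$ as the replacement of an $n$-cell $D=\ph(B^n)$ of $K$ by $D_+=\ph(B^n_+)$, $D_-=\ph(B^n_-)$, $D_0=\ph(B^n_0)$, where the pullback $\ph^{-1}(K)$ of $\del B^n$ has the equator $E$ as a subcomplex. The cells of $M=KL$ that are affected by this are precisely the joins $DL_j$ for $L_j\in L$ (together with $D$ itself, the case $L_j=\varnothing$), since all other cells of $M$ lie in $K'L$ unchanged. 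So the first step is to observe that it suffices to subdivide each cell $DL_j$ into $D_+L_j$, $D_-L_j$, $D_0L_j$, compatibly, and to show this can be realized by elementary subdivisions.

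First I would handle $D$ itself: that is exactly the given elementary subdivision of $K\subset M$, legitimate because the hypothesis on $E$ is inherited. Next I would process the cells $DL_j$ in order of increasing dimension of $L_j$. For a fixed $L_j=\psi(B^m)$, the cell $A=DL_j$ has characteristic map built from $\ph*\psi$ via the PL homeomorphism $B^n*B^m\simeq B^{n+m+1}$ (\leref{l:PLCWjoin}). The key claim is that the hyperplane $H_0=\{x_n=0\}$, transported through this homeomorphism, cuts $B^{n+m+1}$ into two halfballs whose common middle disk is (the ball corresponding to) $B^n_0*B^m$, and whose equator is $E*\del B^m\cup B^n_0*\del(B^m)$-type subcomplex — in any case a union of cells of the pullback decomposition $(\ph*\psi)^{-1}(M)$, \emph{provided} the boundary subdivision of $A$ already contains the relevant pieces as subcomplexes. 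By processing in order of increasing dimension, the boundary cells $D_0L_i$, $D_\pm L_i$ with $\dim L_i<\dim L_j$, and the faces $D_0$, $D_\pm$, and $DL_i$ for $L_i<L_j$, have already been created or are faces of $A$; so the required equator is indeed a subcomplex of $\del A$ at the moment we subdivide $A$. Hence \leref{l:elem_subdivision} applies and gives the elementary subdivision of $A$ into $D_+L_j$, $D_-L_j$, $D_0L_j$.

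After all cells $DL_j$ (including $j$ ranging over the empty cell, handled first) have been subdivided this way, the resulting complex has exactly the cells of $K'L$: the original shared cells of $K$ and $L$ away from $D$, the cells $K_iL_j$ not involving $D$, and the triples $D_\pm L_j$, $D_0 L_j$ together with $D_\pm$, $D_0$ — which is precisely $M'=K'L$. The main obstacle I expect is the bookkeeping in the second step: verifying carefully that the image of $H_0$ under the iterated join-ball homeomorphism is again (up to PL homeomorphism of $B^{n+m+1}$) the standard splitting hyperplane, so that $D_0L_j$ really is $\ph(B^n_0)\psi(B^m)$ with the correct characteristic map, and that the equator condition of \leref{l:elem_subdivision} holds at each stage. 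This is where the order-of-increasing-dimension induction does the real work — it guarantees the needed subcomplex structure on the boundary before each cut — and where one must check that the join of the already-subdivided boundary is compatible with the join decomposition $B^n*B^m$, i.e. that $(\text{subdivision of }\del B^n)*B^m$ and $B^n_\pm*(\text{subdivision of }\del B^m)$ glue to the pullback on $\del B^{n+m+1}$. Everything else is the routine verification that joins and cones of PLCW complexes are PLCW, already available from Lemmas~\ref{l:PLCWjoin} and~\ref{l:elem_subdivision}.
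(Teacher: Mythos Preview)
Your proposal is correct and follows essentially the same approach as the paper: subdivide each join cell $DL_j$ into $D_+L_j$, $D_-L_j$, $D_0L_j$ using a PL homeomorphism $B^n*B^m\simeq B^{n+m+1}$ that carries $B^n_0*B^m$ to $B^{n+m+1}_0$ and $B^n_\pm*B^m$ to $B^{n+m+1}_\pm$, processing the cells $L_j$ in order of increasing dimension so that the equator hypothesis of \leref{l:elem_subdivision} is satisfied at each stage. The paper's proof is terser and simply asserts the existence of this homeomorphism, but the argument is the same.
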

\begin{proof}
	If $C\in K$ is an $n$-cell and $C=C_++C_-+C_0$ its elementary subdivision as in 
	\leref{l:elem_subdivision}, and $D$ is a cell in $L$, then $CD=C_+D + C_-D + C_0D$ 
	is an elementary subdivision of $CD$, which follows from existence  
	of  a homeomorphism $\psi\colon B^n*B^m\isoto B^{m+n+1}$ such that 
	$\psi(B_0^n*B^m)=B_0^{m+n+1}$, $\psi(B_\pm^n*B^m)=B_\pm^{m+n+1}$. Repeating it for 
	every cell $D\in L$ in order of increasing dimension, we see that $K'L$ can be obtained 
	from $KL$ by a sequence of elementary subdivisions.
\end{proof}
\begin{corollary}\label{c:elem_join}
	If $K\elem K'$, then $KL\elem K'L$. 
\end{corollary}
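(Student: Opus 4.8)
The plan is to deduce the corollary from \thref{t:elem_join} by induction on the length of a chain of moves connecting $K$ and $K'$. By \deref{d:elem_subdivision}, the hypothesis $K\elem K'$ means there is a finite sequence of PLCW complexes $K=K_0,K_1,\dots,K_m=K'$ in which, for each $i$, either $K_{i+1}$ is obtained from $K_i$ by an elementary subdivision, or $K_{i+1}$ is obtained from $K_i$ by the inverse move (erasing a cell), i.e. $K_i\lhd K_{i+1}$ is an elementary subdivision. Since neither an elementary subdivision nor its inverse changes the support, $|K_i|=|K|$ for all $i$; hence $|K_i|$ and $|L|$ remain independent throughout, and each join $K_iL$ is a well-defined PLCW complex by \leref{l:PLCWjoin}. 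So the statement to be proved makes sense at every stage of the chain.

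First I would reduce to the one-step claim $K_iL\elem K_{i+1}L$ for each $i$. The relation $\elem$ is generated by a symmetric collection of moves, hence is an equivalence relation; concatenating the per-step chains and using transitivity then yields $KL\elem K'L$. For the one-step claim there are two cases. If $K_{i+1}\lhd K_i$ is an elementary subdivision, then \thref{t:elem_join}, applied to the pair $(K_i,K_{i+1})$, says that $K_{i+1}L$ can be obtained from $K_iL$ by a sequence of elementary subdivisions; in particular $K_iL\elem K_{i+1}L$. If instead $K_i\lhd K_{i+1}$ is an elementary subdivision, I apply \thref{t:elem_join} to the pair $(K_{i+1},K_i)$ to get that $K_iL$ is obtained from $K_{i+1}L$ by a sequence of elementary subdivisions; reading this chain backwards (as a sequence of inverse moves) again gives $K_iL\elem K_{i+1}L$.

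The main obstacle is, in fact, negligible: all the real work is already contained in \thref{t:elem_join}, and the corollary is a formal bookkeeping argument. The only points that deserve an explicit sentence in the write-up are (a) that each intermediate join $K_iL$ is defined, which holds because the support is constant along the chain, and (b) that $\elem$ is symmetric and transitive, so that the individual chains $K_iL\elem K_{i+1}L$ may be strung together and the inverse-move case handled by time-reversal. With these remarks in place the proof is complete in a few lines.
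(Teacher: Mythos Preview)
Your argument is correct and is exactly the intended one: the paper states the corollary without proof because it is immediate from \thref{t:elem_join} together with the fact that $\elem$ is an equivalence relation, and your write-up simply spells this out. The only minor remark is that the paper does not explicitly verify the supports remain independent along the chain, but your observation that elementary moves preserve $|K_i|$ handles this cleanly.
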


\section{Main theorem}\label{s:main}
In this section, we formulate and prove the main theorem of this paper. Recall the notation 
$K\elem L$ from \deref{d:elem_subdivision}.

\begin{theorem}\label{t:main1}
Let $K, K'$ be two PLCW decompositions of a compact polyhedron $X$. Then $K\elem K'$.
\end{theorem}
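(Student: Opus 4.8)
The plan is to reduce the statement to the classical case of triangulations, where Alexander's theorem is available, so that the only genuinely new ingredient is the fact that radial and stellar subdivisions can be realized by elementary subdivisions and their inverses. The whole argument will be an induction on $n=\dim X$; the case $n=0$ is trivial, since a $0$-dimensional polyhedron has a unique cell decomposition. For the reduction, apply \thref{t:triangulation}: $K$ has a subdivision $T\lhd K$ which is a triangulation and is obtained from $K$ by a finite sequence of radial subdivisions, and likewise $K'$ has such a $T'$. So it suffices to prove (i) that a radial subdivision $R$ of any PLCW complex $K_0$ satisfies $K_0\elem R$ — which then gives $K\elem T$ and $K'\elem T'$ — and (ii) that $T\elem T'$ for any two triangulations of $X$.

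For (i), the key lemma, I would argue by induction following the two-dimensional model in \firef{f:nonelem_subdivision2}. Let $C=\ph(B^n)$ be the cell being radially subdivided and $L=\ph^{-1}(K_0)$ its boundary decomposition. First, using the main theorem in dimension $n-1$ applied to $K_0^{n-1}$ — noting that elementary subdivisions of cells of $K_0^{n-1}$ are also elementary subdivisions of $K_0$ — refine $L$ so that the standard equator $E\subset\del B^n$ becomes a union of cells. Then perform an elementary subdivision of $C$ along the equatorial wall, $C=C_+ + C_- + C_0$ with $C_0\simeq B^{n-1}$. Next, radially subdivide the wall $C_0$ (radial lemma in dimension $n-1$), which introduces the cone vertex $\ph(O)$. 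Finally, assemble the cone cells $\ph(OL_i)$ of $R$ from a sequence of inverse elementary subdivisions together with elementary subdivisions carried out inside $C_+$ and $C_-$; these last are cones of already-subdivided subcomplexes of $\del C_\pm$ onto $\ph(O)$, and are legitimate by \leref{l:PLCWjoin} and \coref{c:elem_join}. A radial subdivision of a cell of dimension $<n$ inside the $n$-complex is handled by the same argument applied to the relevant skeleton.

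For (ii), invoke Alexander's theorem in its classical form for simplicial complexes: two triangulations $T,T'$ of $X$ are related by a finite sequence of stellar subdivisions and inverse stellar subdivisions, so it is enough to realize one stellar subdivision by elementary moves. By the remark following \deref{d:radial_subdivision}, the stellar subdivision of $T$ at $a\in\odot{F}$ is obtained by (a) radially subdividing the simplex $F$ — an elementary move by step (i) — and (b) replacing each simplex $A=FB$ in the star of $F$ by the joins $R_iB$, $R_i\in R$; step (b) amounts, for each such $A$, to coning the already-subdivided face $F\subset\mdot{A}$ onto $\ph(O)$, which is produced by the same bisection-and-reassembly argument as in step (i), now propagating the elementary subdivisions of $F$ through the join by \thref{t:elem_join}. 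This closes the induction and proves the theorem.

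The main obstacle is step (i), and in particular making the induction airtight: the substep refining $L$ so that an equator is a subcomplex is itself a subdivision of a lower-dimensional skeleton and must be justified by the $(n-1)$-dimensional case of the theorem without circularity, while the sequence of inverse elementary moves used to build the cone must be shown to assemble to precisely the radial subdivision $R$. Reconciling the global coning operation with the local equatorial bisections is where essentially all of the work lies.
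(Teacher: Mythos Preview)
Your overall architecture---induction on $n=\dim X$, reduction to triangulations via \thref{t:triangulation}, and Alexander's theorem for the simplicial case---is exactly the paper's. Step (ii) also matches: both you and the paper decompose a stellar move as a radial subdivision of the starred simplex followed by join moves handled by \thref{t:elem_join}/\coref{c:elem_join}.

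Where you diverge is in step (i). You attack the radial subdivision directly inside the ambient complex $K_0$: refine $K_0^{n-1}$ via the inductive hypothesis until the pullback $L$ on $\partial B^n$ contains an equator, perform the elementary split of $C$, and recurse. The paper instead first isolates the problem to the ball: it proves that for \emph{any} PLCW decomposition $\mdot K$ of $S^{n-1}$, one has $a\mdot K\elem B^n+\mdot K$ as decompositions of $B^n$. The mechanism is to use the induction hypothesis on $S^{n-1}$ to get $\mdot K\elem L$ where $L$ is the hemisphere decomposition $S^{n-1}_+ + S^{n-1}_- + L_0$, and then invoke \coref{c:elem_join} to cone this: $a\mdot K\elem aL$. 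Since $aL$ already contains the equatorial disk structure, one further inductive step on $aL_0$ and one erasure finish the job.

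The gain in the paper's route is that on the standalone ball the boundary sphere is unconstrained, so the induction hypothesis applies to $S^{n-1}$ with no interference from other $n$-cells; there is no need to manufacture an equator inside a pullback determined by a possibly non-injective attaching map. Your route can be completed (after refining $K_0^{n-1}$ enough, \emph{some} separating $(n-2)$-sphere will sit in the $1$-skeleton of $L$, and PL Schoenflies makes it an equator), but the obstacle you correctly flag---that $L$ is not freely adjustable, only $K_0^{n-1}$ is, and these are linked through the non-injective $\varphi$---is exactly what the paper's cone trick circumvents.
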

\begin{proof}

This proves (for PLCW decompositions) the conjecture of Oeckl \cite{oeckl2}:
that any any cell decompositions can be obtained form each other by a
sequence of $(,n,k)$ moves; in fact, it proves a stronger result, that
$(n,n)$ moves are already enough. 

We proceed by induction in $n=\dim X$. If $n=0$, there is nothing to
prove. So from now on, we assume that $n>0$ and that the theorem is already
proved for all polyhedra of dimension less  than $n$. 

{\bf Step 1.}
Let  $X=B^n$ be an $n$-ball,  $\mdot{K}$ -- a PLCW decomposition of $S^{n-1}=\del B^n$, and 
$R=a\mdot{K}$ --- the corresponding radial cell decomposition of $X$, $a\in \Int(B^n)$.  Then 
$R\elem B^n+\mdot{K}$.

Indeed, let $L$ be a PLCW decomposition of $S^{n-1}$ consisting of the  upper  and lower 
hemispheres $S^{n-1}_\pm\simeq B^{n-1}$ and some PLCW decomposition $L_0$ of the equator $E$. 
By induction assumption, $\mdot{K}\elem L$; by \coref{c:elem_join}, this implies 
$$
a\mdot{K}\elem aL= B_+^n+B^n_- +S^{n-1}_+ +S^{n-1}_- + aL_0.
$$ 
By using the induction assumption again, $aL_0\elem B^n_0 +L_0$, so 
$$
a\mdot{K}\elem  B_+^n+B^n_- +B^n_0 +S^{n-1}_+ +S^{n-1}_- +L_0\elem B^n+S^{n-1}_+ + S^{n-1}_- + L_0\elem B^n+\mdot{K}
$$

{\bf Step 2.}
If $K'$ is obtained from $K$ by a sequence of radial subdivisions, then $K'\elem K$.

This follows from the previous step and definition. 

{\bf Step 3.}
For any PLCW decomposition $K$, there is a triangulation $T$ such that $K\elem T$.

Indeed, it follows from the previous step and \thref{t:triangulation}.

{\bf Step 4.} If $T, T'$ are triangulations of $X$, then $T\elem T'$.

By Alexander's theorem, $T$ can be obtained from $T'$ by a sequence of stellar moves, so it 
suffices to prove the theorem in the case when $T'$ is obtained from $T$ by starring at 
point $a\in \Int(C)$ for some simplex $C\in T$.  
By the discussion in \seref{s:subdivision}, we can also describe $T'$ by replacing $C$ by the 
radial subdivision $C'$ of $C$ and replacing every simplex $A=CB$ in the star of $C$ by $C'B$. 
By step 2 and \coref{c:elem_join}, this implies that $T'\elem K$. 

Combining steps 3 and 4 above, we arrive at the statement of the theorem. 
\end{proof}

\section{Orientations}\label{s:orientation}
Recall that the group of homeomorphisms of $B^n$ has a homomorphism 
to $\Z_2$, called {\em orientation}. Using this, we can define the 
notion of orientation of a cell.

\begin{definition}\label{d:orientation}
Let $C\subset \R^N$ be a generalized $n$-cell. An {\em orientation} of 
$C$ is an equivalence class of characteristic maps $B^n\to C$, where two 
characteristic maps $\ph_1,\ph_2\colon B^n\to C$ are equivalent if 
$\psi=\ph_2\ph_1^{-1}\colon B^n\to B^n$ is orientation-preserving. 

An oriented cell $\mathbf C=(C, [\ph])$ is a pair consisting of a cell $C$ 
and an orientation $[\ph]$. 
\end{definition}

Note that any convex $n$-cell $C\subset \R^n$ has a canonical orientation. Moreover,
if $C\subset \R^n$ is is a convex $n$-cell, and $D\subset \del C$ is a generalized  
$(n-1)$-cell, then $D$ has a canonical orientation  defined by the usual condition:
\begin{equation}\label{e:orientation_boundary}
\eps(\mathbf{C}, \mathbf{D})=1
\end{equation}
where $\eps(\mathbf{C}, \mathbf{D})$ is the incidence number, defined in
the same way as for CW cells (see, e.g., \cite{rourke}*{Appendix A.7}).

Thus, if $C$ is is a convex $n$-cell in $\R^n$, and $L$ -- a PLCW decomposition 
of $\del C$, then each of $(n-1)$-cells $L_i\in L$ has a canonical orientation.

The following definition generalizes this to an arbitrary oriented cell.

\begin{definition}\label{d:delC}
Let $K$ be a PLCW complex, and $\mathbf{C}=(C, [\ph])$ --- an oriented cell. 
Let $L=\ph^{-1}(K)$ be the pullback decomposition of $\del B^n$. 
We define the boundary $\del \mathbf{C}$ as a multiset  (set with multiplicities) 
of oriented $(n-1)$-cells
$$
\del \mathbf{C}=\bigcup (\ph(L_i), [\ph\circ \ph_i])
$$
where the union is over all $(n-1)$-cells $L_i\in L$, each taken 
with the natural orientation $[\ph_i]$ defined by \eqref{e:orientation_boundary}. 
\end{definition}
It is easy to see, using \thref{t:char_map}, that this definition does not depend 
on the choice of characteristic map $\ph$ in the equivalence class.

Note that by definition of a PLCW,
 for each $L_i\in L$, $\ph(L_i)$ is an $(n-1)$-cell of $K$; however, the same $(n-1)$-cell $D\in K$ 
can appear in $\del C$ more than once, and possibly with different
orientations.  Note also that passing from the multisets to the abelian
group generated by oriented cells, we get the usual definition of the
boundary operator in the chain complex of a CW complex. However, for
applications to topological field theory, the  definition of the boundary as a
multiset is much more useful.

\begin{example}
Let $C$ be the $2$-cell shown below. Then $\del C=\{a,\bar a, b\}$, where $\bar a$ 
denotes $a$ with opposite orientation. 
$$\tzCircleWithRadiusOrient
$$
\end{example}

The proof of the following lemma is left to the reader as an exercise.
\begin{lemma}
 Let $X$ be an oriented PL manifold with boundary  and $K$ --- a PLCW
decomposition of  $X$. Then 
$$
\cup_{\mathbf{C}} \del \mathbf{C}=
\Bigl(\cup_{D} \mathbf{D}\Bigr) \cup
\Bigl(\cup_{F} \mathbf{F}\cup \mathbf{\bar F} \Bigr)
$$
where 
\begin{itemize}
\item $C$ runs over all $n$-cells of $K$, each taken with orientation 
	induced by orientation of $X$
\item $D$ runs over all  $(n-1)$ cells such that $D\subset \del X$, 
	each taken with orientation induced by orientation of $\del X$  
\item $F$ runs over all \textup{(}unoriented\textup{)} $(n-1)$-cells such
	that $\odot F\subset \Int(X)$; $\mathbf{F}$ and $\mathbf{\bar F}$ are the
	two possible orientations of $F$. 
\end{itemize}
\end{lemma}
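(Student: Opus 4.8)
The plan is to verify the multiset identity one $(n-1)$-cell of $K$ at a time. Since every cell occurring in some $\del\mathbf C$ has dimension $n-1$, cells of other dimensions contribute to neither side, so it suffices to fix an unoriented $(n-1)$-cell $F$ of $K$ and determine, with orientations, the multiplicity of $F$ in $\cup_{\mathbf C}\del\mathbf C$. I would first record the elementary fact that every $(n-1)$-cell $F$ has $\odot F$ entirely in $\Int X$ or entirely in $\del X$ (one checks this using that the $n$-cells' interiors are open $n$-manifolds without boundary, hence lie in $\Int X$, so $\del X\subseteq K^{n-1}$, and that $\del X$ is a collared PL submanifold) --- this is the dichotomy in the statement. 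Then I would argue in three steps: (i) reduce the multiplicity of $F$ to a count local to a point $x\in\odot F$; (ii) evaluate that count using the manifold hypothesis; (iii) pin down the orientations.

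For step (i): by \deref{d:delC}, the multiplicity of $F$ in $\del\mathbf C$ for an $n$-cell $C$ with characteristic map $\ph_C$ and pullback $L=\ph_C^{-1}(K)$ is $\#\{L_i\in L:\ph_C(L_i)=F\}$. Since $\ph_C|_{\del B^n}\colon L\to K^{n-1}$ is a regular cellular map, hence dimension-preserving, each such $\ph_C|_{\odot{L_i}}$ is a homeomorphism onto $\odot F$; moreover $\ph_C^{-1}(\odot F)\subseteq\del B^n$ (because $\ph_C(\Int B^n)=\odot C$ is disjoint from $\odot F$), and its connected components are exactly these sets $\odot{L_i}$ --- interiors of distinct cells of $K$ are disjoint, and the boundary of each $L_i$ maps into $K^{n-2}$, which misses $\odot F$. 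Fixing $x\in\odot F$, the set $\ph_C^{-1}(x)$ is finite and lies in $\del B^n$ (and is empty unless $F<C$), and each component $\odot{L_i}$ carries exactly one of its points; so the multiplicity of $F$ in $\cup_{\mathbf C}\del\mathbf C$ equals $\sum_C\#\bigl(\ph_C^{-1}(x)\bigr)$, summed over all $n$-cells $C$.

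For step (ii): choose a neighbourhood $N$ of $x$ in $X$ so small that $N\cap K^{n-1}=N\cap F=N\cap\odot F$ (possible since $x$ lies in no cell of $K$ but $F$), with $N\cap\odot F$ connected, and --- shrinking further --- with $\ph_C^{-1}(N)$ a disjoint union of half-balls, one about each point of $\ph_C^{-1}(x)$, for every $n$-cell $C$. Since each $\ph_C$ maps $\Int B^n$ homeomorphically onto $\odot C$ and these interiors partition $X\setminus K^{n-1}$, the $\ph_C$ together identify $\sqcup_C\bigl(\ph_C^{-1}(N)\cap\Int B^n\bigr)$ with $N\setminus F=N\setminus K^{n-1}$; for the chosen $N$ each $\ph_C^{-1}(N)\cap\Int B^n$ has one connected piece per point of $\ph_C^{-1}(x)$, so $\sum_C\#(\ph_C^{-1}(x))$ equals the number of connected components of $N\setminus F$. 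Now invoke that $X$ is a PL manifold. If $\odot F\subseteq\Int X$, choose $N\simeq\R^n$; then $N\cap F$ is a connected orientable PL $(n-1)$-manifold properly embedded in $N$, so Alexander duality gives $\tilde H_0(N\setminus F)\cong\Z$, i.e.\ two components. If $F\subseteq\del X$, choose $N\simeq\R^n_+$; then $N\cap F=N\cap\del X$, so $N\setminus F$ is a single open half-space, i.e.\ one component. Hence the multiplicity of $F$ in $\cup_{\mathbf C}\del\mathbf C$ is $2$ in the first case and $1$ in the second.

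For step (iii): passing from multisets to signed sums gives the cellular boundary operator, so in $C_\bullet(K;\Z)$ the coefficient of a chosen orientation $\mathbf F$ in $\sum_C\partial\mathbf C$ is $m_+-m_-$, where $m_\pm$ are the multiplicities of the two orientations of $F$ in $\cup_{\mathbf C}\del\mathbf C$; by step (ii), $m_++m_-$ is $2$ (resp.\ $1$). But $\sum_C\mathbf C$, each $n$-cell oriented by the orientation of $X$, is the standard relative cycle representing $[X]\in H_n(X,\del X)$, and the connecting homomorphism carries it to the fundamental class of $\del X$; hence $\sum_C\partial\mathbf C$ is a chain supported on $\del X$ whose $F$-coefficient is $0$ if $\odot F\subseteq\Int X$ and $+1$ (with respect to the $\del X$-induced orientation) if $F\subseteq\del X$. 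In the interior case $m_+-m_-=0$ and $m_++m_-=2$ force $m_+=m_-=1$; in the boundary case $m_++m_-=1$ together with the coefficient $+1$ for the $\del X$-orientation force the single occurrence to be $\mathbf D$ with the orientation induced from $\del X$. This is precisely the asserted identity. The delicate point is step (ii): a single $n$-cell may abut $F$ from both sides, and its characteristic map need not be injective on $\del B^n$, so one must count local ``sheets'' of the $\ph_C^{-1}(N)$ rather than cells; and the statement that a codimension-one cell locally separates a PL manifold into two pieces is best extracted from the Alexander duality computation above --- which uses only the homeomorphism type of $N\cap F$, not local flatness of the pair $(N,N\cap F)$ --- thereby avoiding PL Schoenflies. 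Everything else is routine bookkeeping.
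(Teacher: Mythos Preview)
The paper gives no proof of this lemma --- it is explicitly ``left to the reader as an exercise'' --- so there is nothing to compare your argument against. Your three-step proof is correct and would stand as a complete solution.

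Two small refinements are worth noting. In step (ii), the Alexander-duality count is cleaner if phrased via links: for $x\in\odot F\subset\Int X$, the link $\operatorname{lk}(x,X)$ is a PL $(n-1)$-sphere and $\operatorname{lk}(x,F)$ is a PL $(n-2)$-sphere inside it, and classical Alexander duality for a compact subpolyhedron of a sphere gives $\tilde H_0\bigl(\operatorname{lk}(x,X)\setminus\operatorname{lk}(x,F)\bigr)\cong\tilde H^{n-2}(S^{n-2})\cong\Z$; the open star then cones this off. This avoids having to state a non-compact duality theorem for a ``properly embedded'' $(n-1)$-manifold in $\R^n$. In step (iii), invoking the fundamental class is legitimate, but observe that the statement ``$\sum_C\mathbf C$ is a relative cycle'' already \emph{contains} the assertion $m_+-m_-=0$ for interior cells; you are therefore citing the independently established fact that in any CW structure on an oriented PL manifold the compatibly-oriented top cells sum to a fundamental cycle. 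That is standard, but a purely local alternative --- the two sheets from step (ii) lie on opposite sides of $\odot F$ in a chart of $\Int X$, hence their outward normals point oppositely and they induce opposite boundary orientations --- would make step (iii) self-contained and match the elementary spirit of the exercise.
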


\begin{bibdiv}
\begin{biblist}
\bib{balsam-kirillov}{article}{ 
  author={Balsam, Benjamin },
  author={Kirillov, Alexander, Jr},
  title={Turaev-Viro invariants as an extended TQFT},
  eprint={arXiv:1004.1533},
}

%

\bib{oeckl1}{article}{
   author={Oeckl, Robert},
   title={Renormalization of discrete models without background},
   journal={Nuclear Phys. B},
   volume={657},
   date={2003},
   number={1-3},
   pages={107--138},
   issn={0550-3213},
   review={\MR{1969693 (2004j:83037)}},
   doi={10.1016/S0550-3213(03)00145-7},
}

\bib{oeckl2}{book}{
   author={Oeckl, Robert},
   title={Discrete gauge theory},
   note={From lattices to TQFT},
   publisher={Imperial College Press},
   place={London},
   date={2005},
   pages={xii+202},
   isbn={1-86094-579-1},
   review={\MR{2174961 (2006i:81142)}},
   doi={10.1142/9781860947377},
}
\bib{rourke}{book}{
   author={Rourke, Colin Patrick},
   author={Sanderson, Brian Joseph},
   title={Introduction to piecewise-linear topology},
   series={Springer Study Edition},
   note={Reprint},
   publisher={Springer-Verlag},
   place={Berlin},
   date={1982},
   pages={viii+123},
   isbn={3-540-11102-6},
   review={\MR{665919 (83g:57009)}},
}

\end{biblist}
\end{bibdiv}

\end{document}